\definecolor{webgreen}{rgb}{0,.5,0}
\definecolor{webbrown}{rgb}{.6,0,0}
\newcommand{\seqnum}[1]{\href{https://oeis.org/#1}{\rm \underline{#1}}}
\DeclareMathOperator{\add}{add}
\DeclareMathOperator{\incr}{incr}
\DeclareMathOperator{\isk}{isk}
\DeclareMathOperator{\pair}{pair}
\def\andd{\, \wedge \,}
\def\Zee{\mathbb{Z}}
\begin{document}

\theoremstyle{plain}
\newtheorem{theorem}{Theorem}
\newtheorem{corollary}[theorem]{Corollary}
\newtheorem{lemma}[theorem]{Lemma}
\newtheorem{proposition}[theorem]{Proposition}

\theoremstyle{definition}
\newtheorem{definition}[theorem]{Definition}
\newtheorem{example}[theorem]{Example}
\newtheorem{conjecture}[theorem]{Conjecture}

\theoremstyle{remark}
\newtheorem{remark}[theorem]{Remark}

\title{Proof of Irvine's Conjecture via Mechanized Guessing}

\author{Jeffrey Shallit\footnote{Research supported by
NSERC grant 2018-04118.}\\
School of Computer Science\\
University of Waterloo\\
Waterloo, ON N2L 3G1 \\
Canada\\
\href{mailto:shallit@uwaterloo.ca}{\tt shallit@uwaterloo.ca}}

\maketitle

\begin{abstract}
We prove a recent conjecture of Sean A. Irvine about a nonlinear recurrence, using
mechanized guessing and verification.  Finite automata and
the theorem-prover {\tt Walnut} play a large role in the proof.\\

\noindent Key words:  Irvine's conjecture, Gutkovskiy's sequence, numeration system,
morphism, automaton, {\tt Walnut}, nonlinear recurrence, automatic sequence,
combinatorial game, subword complexity, critical exponent.
\end{abstract}

\section{Introduction}

Mathematicians have long used intelligent guessing of a problem's solution,
followed by rigorous verification (for example, by induction), to prove
theorems.  In this note I show how to do this, at least in some cases, using
a simple algorithm
to infer a finite automaton from empirical data.   Once a candidate automaton
is inferred, a rigorous proof of its correctness can be supplied by
using {\tt Walnut}, a theorem-prover for automatic sequences \cite{Mousavi:2016,Shallit:2022}.

On May 24 2017 Ilya Gutkovskiy proposed the following nonlinear recurrence as
sequence \seqnum{A286389} 
in the OEIS (On-Line Encyclopedia of Integer Sequences) \cite{Sloane:2023}:
\begin{equation}
g_n = \begin{cases}
	0, & \text{if $n=0$;} \\
	n - g_{\lfloor g_{n-1}/2 \rfloor}, & \text{otherwise.}
	\end{cases}
	\label{gut}
\end{equation}
The first few values of this sequence, which we call
Gutkovskiy's sequence, are given in Table~\ref{tab4}.
This recurrence is a variation on similar sequences originally discussed
by Hofstadter \cite[p.~137]{Hofstadter:1979}.

Then, on July 20 2022, Sean A. Irvine observed that this sequence
seemed to be given by the partial sums of the sequence
\seqnum{A285431}, which is the fixed point of the morphism
$h$, where $h(1) = 110$ and $h(0) = 11$.  We denote the sequence
\seqnum{A285431} by
$(k_n)_{n \geq 1}$, in honor of its proposer, Clark Kimberling.
The first few values
of the sequence \seqnum{A285431} are also given in Table~\ref{tab4};
in order to maintain the indexing given
in the OEIS, we define $k_0 = 0$.
More precisely, then, Irvine's conjecture is that
$g_n = \sum_{1 \leq i \leq n} k_i$.

\begin{table}[H]
\begin{center}
\begin{tabular}{c|ccccccccccccc}
$n$ & 0 & 1 & 2 & 3 & 4 & 5 & 6 & 7 & 8 & 9 & 10 & 11\\
\hline
$g_n$ & 0 & 1 & 2 & 2 & 3 & 4 & 4 & 5 & 6 & 7 & 8 & 8 \\
$k_n$ & 0 & 1 & 1 & 0 & 1 & 1 & 0 & 1 & 1 & 1 & 1 & 0 \\
\end{tabular}
\end{center}
\caption{First few values of $g_n$.}
\label{tab4}
\end{table}
In this note we prove Irvine's conjecture, as well as a number
of related results, using automata theory.

All the needed {\tt Walnut} code to verify the claims of the paper
is available on the author's website, 
\url{https://cs.uwaterloo.ca/~shallit/papers.html}.


\section{From a morphism to a numeration system}

We start with the morphism $h:1 \rightarrow 110$, $0 \rightarrow 11$
that generates OEIS sequence \seqnum{A285431}.
Define $K_n = h^n (1)$, so that $K_0 = 1$, $K_1 = 110$, $K_2 = 11011011$,
and so forth.  
\begin{proposition}
For $n \geq 2$ we have $K_n = K_{n-1} K_{n-1} K_{n-2} K_{n-2}$.
\label{prop1}
\end{proposition}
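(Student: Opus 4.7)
The plan is to prove this by straightforward induction on $n$, exploiting the fact that $h$ is a morphism on the free monoid $\{0,1\}^*$, so $h(uv) = h(u)h(v)$ for all words $u, v$.

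For the base case $n = 2$, I would simply compute directly: by definition $K_2 = h(K_1) = h(110) = h(1)h(1)h(0) = 110 \cdot 110 \cdot 11 = 11011011$, while $K_1 K_1 K_0 K_0 = 110 \cdot 110 \cdot 1 \cdot 1 = 11011011$, so the two agree.

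For the inductive step, suppose $K_n = K_{n-1}K_{n-1}K_{n-2}K_{n-2}$ for some $n \geq 2$. Applying $h$ to both sides and using the morphism property gives
\[
K_{n+1} \;=\; h(K_n) \;=\; h(K_{n-1})\, h(K_{n-1})\, h(K_{n-2})\, h(K_{n-2}) \;=\; K_n\, K_n\, K_{n-1}\, K_{n-1},
\]
which is exactly the desired identity at $n+1$. This completes the induction.

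There is no real obstacle here: once the base case is checked, the inductive step is automatic from the definition $K_n = h^n(1)$ and the homomorphism property of $h$. The only thing to be slightly careful about is the indexing, since the recurrence reaches back two steps and therefore needs $K_0$ and $K_1$ as seeds, which is why the base case is taken at $n = 2$ rather than $n = 1$.
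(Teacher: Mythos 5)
Your proof is correct and takes essentially the same approach as the paper: a short verification resting on the morphism property of $h$ and the relation $h(K_m)=K_{m+1}$. The only (immaterial) difference is that the paper's argument is really a direct computation --- unfolding $K_n = h^{n-1}(h(1)) = h^{n-1}(1)h^{n-1}(1)h^{n-1}(0)$ and then $h^{n-1}(0)=h^{n-2}(11)$ --- so it never actually invokes an induction hypothesis, whereas you run a genuine induction by applying $h$ to the identity at level $n$; both are equally valid.
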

\begin{proof}
By induction on $n$.  The base cases of $n = 0,1$ are trivial.  Otherwise
assume $n\geq 2$.  Then
\begin{align*}
K_n &= h^n (1) = h^{n-1} (h(1)) = h^{n-1} (1) h^{n-1} (1) h^{n-1} (0) \\
&= K_{n-1} K_{n-1} h^{n-2} (11) =  K_{n-1} K_{n-1} K_{n-2} K_{n-2} .
\end{align*}
\end{proof}

Since each $K_i$ is the prefix of $K_{i+1}$, it follows that there is
a unique limiting infinite word ${\bf k} = k_1 k_2 k_3 \cdots =  1101101111 \cdots$ of
which all the $K_i$ are prefixes.   Furthermore, Proposition~\ref{prop1}
shows that $\bf k$ is a ``generalized automatic sequence'' as studied
in \cite{Shallit:1988}, and hence there is a numeration system associated
with it, where $k_n$ can be computed by a finite automaton taking, as inputs,
the representation of $n$ in this numeration system.

We now explain how this is done.
Define ${\cal K}_n = |K_n|$, so that
${\cal K}_0 = 1$, ${\cal K}_1 = 3$, ${\cal K}_2 = 8$, and in general
${\cal K}_n = 2{\cal K}_{n-1} + 2{\cal K}_{n-2}$.  This two-term linear
recurrence is sequence \seqnum{A028859} in
the OEIS (and also \seqnum{A155020} shifted by one).  
The Binet form for ${\cal K}_n$, which can be easily verified, is 
\begin{equation}
{\cal K}_n = \left({1\over 2} + {\sqrt{3}\over 3} \right) \gamma^n + 
\left({1\over 2}- {\sqrt{3}\over 3} \right) \delta^n,
\label{binet}
\end{equation}
where $\gamma = 1+\sqrt{3}$ and $\delta = 1-\sqrt{3}$.

We now build a numeration system, which we call {\it $K$-representation},
out of the sequence $({\cal K}_i)_{i \geq 0}$.
We represent every natural number as a sum
$\sum_{0 \leq i \leq t} a_i {\cal K}_i$,
where $a_i \in \Sigma_3 := \{ 0,1,2 \}$.  Furthermore we associate
a ternary word $a_t \cdots a_0$ 
with the corresponding sum, as follows:
\begin{equation}
[a_t \cdots a_0]_K := \sum_{0 \leq i \leq t} a_i {\cal K}_i.
\label{kimex}
\end{equation}
Notice that words are written ``backwards'' so the most significant digit is
at the left.

Evidently numbers could have multiple representations in this system
as we have described it so far.  For example $[22]_K = 8 = [100]_K$.
In order to get a unique, canonical representation, we impose
the restriction $a_i a_{i+1} \not= 22$.   This is in analogy with
a similar restriction for the Zeckendorf (or Fibonacci) numeration
system.   We let $(n)_K$ denote this canonical representation for $n$.
Table~\ref{tab1} gives the first few representations in this numeration system.
Notice that the canonical representation for $0$ is $\epsilon$, the
empty string.
\begin{table}[H]
\begin{center}
\begin{tabular}{c|c}
$n$ & $(n)_K$ \\
\hline
0 & $\epsilon$ \\
1 & 1 \\
2 & 2 \\
3 & 10 \\
4 & 11 \\
5 & 12 \\
6 & 20 \\
7 & 21 \\
8 & 100 \\
9 & 101 \\
10 & 102
\end{tabular}
\end{center}
\caption{Representation for the first few numbers.}
\label{tab1}
\end{table}

It is now easy to see that the greedy algorithm produces the
canonical representation \cite{Fraenkel:1985}.
Furthermore, it is easy to see that there is a finite
automaton that takes, as input, a string $x$ over the alphabet
$\Sigma_3$, and accepts if and only if $x$ is a canonical
representation.  It is depicted in Figure~\ref{fig1}.
(We routinely omit useless states without comment.)
\begin{figure}[htb]
\vskip -.5in
\begin{center}
\includegraphics[width=3in]{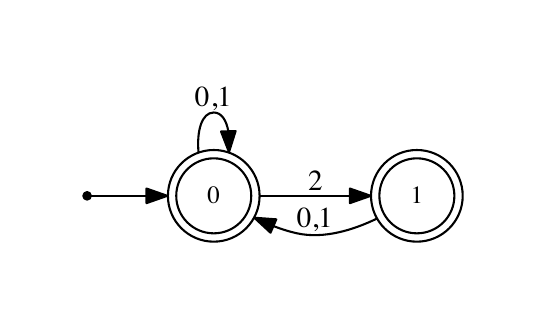}
\end{center}
\vskip -.5in
\caption{Automaton accepting canonical representations.}
\label{fig1}
\end{figure}

Some of the sequences we study in this paper were previously
studied by Fraenkel and co-authors
\cite{Fraenkel:1998,Artstein&Fraenkel&Sos:2008}, in the context
of some variations on Wythoff's game.  
These authors already found the numeration system we described here.
Also see \cite{Fokkink&Ortega&Rust:2022}.
Our main contribution
is to combine the use of automata theory with the numeration system.

\section{An incrementer automaton for $K$-representations}

We claim that we can go from the $K$-representation of $n$
to that of $n+1$ as follows:  if the last digit is $0$, add one to
it.  If the last digit is $1$, add one to it, except in the case
that the representation ends with $a(21)^i$, for $a \in \{ 0,1 \}$,
in which case the representation of $n+1$ ends in $(a+1) 0^{2i}$ instead.
If the last two digits are $a2$, for $a \in \{0,1\}$, then the
last two digits of $n+1$ are $(a+1) 0$.  Verification of this is
straightforward and is left to the reader.

A synchronized automaton `incr' implementing these rules is depicted in Figure~\ref{fig3}.  
The meaning of ``synchronized''
here is that the DFA takes the canonical $K$-representations of $n$ and $x$
in parallel as input, and accepts if $x = n+1$.
\begin{figure}[htb]
\begin{center}
\includegraphics[width=6in]{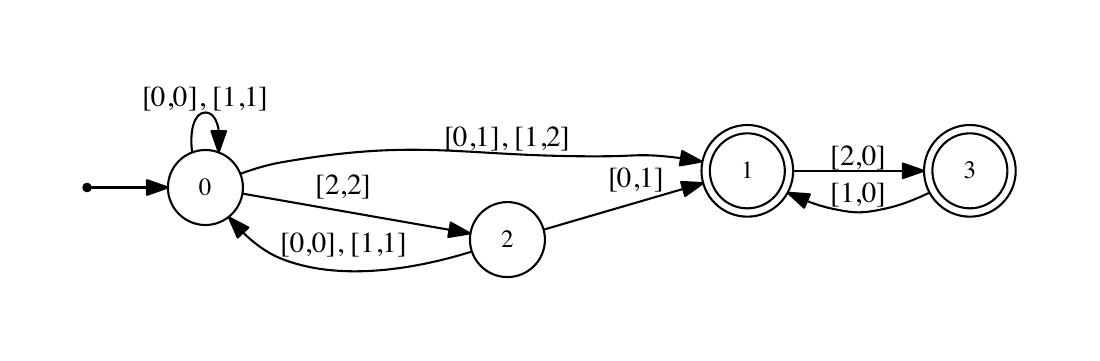}
\end{center}
\caption{Incrementer automaton for $K$-representations.}
\label{fig3}
\end{figure}

\section{An adder automaton for $K$-representation}
The next step is to build an ``adder'' for $K$-representations.
This is a synchronized automaton that takes, in parallel, the canonical
$K$-representations of integers $x,y, z$, and accepts
if and only if $x+y = z$.
The existence of this automaton for our numeration system
follows from very general results
of Frougny and Solomyak \cite{Frougny&Solomyak:1996}.

However, in this case it is actually easier to just ``guess''
the automaton from empirical data, and then verify its correctness.
The method of guessing is based on the Myhill-Nerode theorem
from formal language theory, and is explained, for example,
in \cite{Shallit:2022}.

Once we have an automaton that we believe is an adder, we can
verify its correctness by induction by checking the following conditions.
\begin{itemize}
\item[(i)] $\forall x, y\  \exists z \ \add(x,y,z)$   (adder is well-defined)
\item[(ii)] $\forall x,y,z,w\ (\add(x,y,z) \andd \add(x,y,w)) \implies z=w$ 
(adder represents a function)
\item[(iii)] $\forall x,y,z \ \add(x,y,z) \iff \add(y,x,z)$  (commutative law)
\item[(iv)] $\forall x,y,z,t \ (\exists r\ \add(x,y,r) \andd \add(r,z,t)) \iff
(\exists s\ \add(y,z,s) \andd \add(x,s,t)) $ (associative law)
\item[(v)] $\forall x \ \add(x,0,x)$  (base case of induction)
\item[(vi)] $\forall x,y \ \add(x,1,y) \iff \incr(x,y) $ (induction step).
\end{itemize}

Our candidate adder had $42$ states.  To verify its
correctness, we use the following
straightforward implementation of the conditions above.
\begin{verbatim}
eval check_i "?msd_kim Ax,y Ez $add(x,y,z)":
eval check_ii "?msd_kim Ax,y,z,w ($add(x,y,z) & $add(x,y,w)) => z=w":
eval check_iii "?msd_kim Ax,y,z $add(x,y,z) <=> $add(y,x,z)":
eval check_iv "?msd_kim Ax,y,z,t (Er $add(x,y,r) & $add(r,z,t)) <=> 
   (Es $add(y,z,s) & $add(x,s,t))":
eval checkv "?msd_kim Ax $add(x,0,x)":
eval checkvi "?msd_kim Ax,y $add(x,1,y) <=> $incr(x,y)":
\end{verbatim}
and {\tt Walnut} returns {\tt TRUE} for all six statements. The
correctness of the adder now follows.

We briefly comment on the syntax of {\tt Walnut} commands.  Here
{\tt A} and {\tt E} represent the universal and existential quantifiers
$\forall$ and $\exists$, respectively.   The jargon {\tt ?msd\_kim}
means to interpret the statements using the $K$-numeration system.
The symbol {\tt \&} means logical ``and'', {\tt |} means logical
``or'', {\tt \char'176} is logical negation, {\tt =>} is implication, and {\tt <=>} represents iff.
The command {\tt def} defines an automaton, {\tt eval} evaluates
truth or falsity, and {\tt reg} converts a regular expression to an
automaton.

\section{The Kimberling sequence}

Define $k'_n = k_{n+1}$ for $n \geq 0$.
It is now easy to create a DFAO (deterministic finite automaton
with output) computing the sequence
$(k'_n)_{n \geq 0}$, by associating states of the DFAO with letters
of the alphabet, and transitions with images of those letters, as
explained in \cite{Shallit:1988}.
It is depicted in Figure~\ref{fig2}.
\begin{figure}[htb]
\vskip -.5in
\begin{center}
\includegraphics[width=4in]{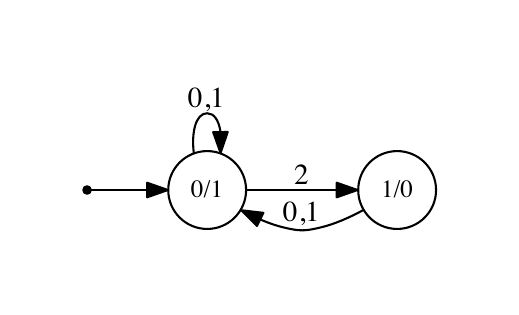}
\end{center}
\vskip -.5in
\caption{DFAO computing $k'_n$.}
\label{fig2}
\end{figure}
This DFAO takes a canonical $K$-representation of $n$ as
input, and outputs (as the last state reached) the value
of $k'_n$.  In {\tt Walnut} this is represented by
the file {\tt KP.txt}, as follows:

\vbox{
\begin{verbatim}
msd_kim

0 1
0 -> 0
1 -> 0
2 -> 1

1 0
0 -> 0
1 -> 0
\end{verbatim}
}

Once we have this DFAO, we can get a DFAO for $(k_n)_{n \geq 0}$
simply by shifting the index.
\begin{verbatim}
def kks "?msd_kim KP[n-1]=@1":
combine K kks:
\end{verbatim}
The resulting DFAO is depicted in Figure~\ref{fig7}.
\begin{figure}[htb]
\begin{center}
\includegraphics[width=5in]{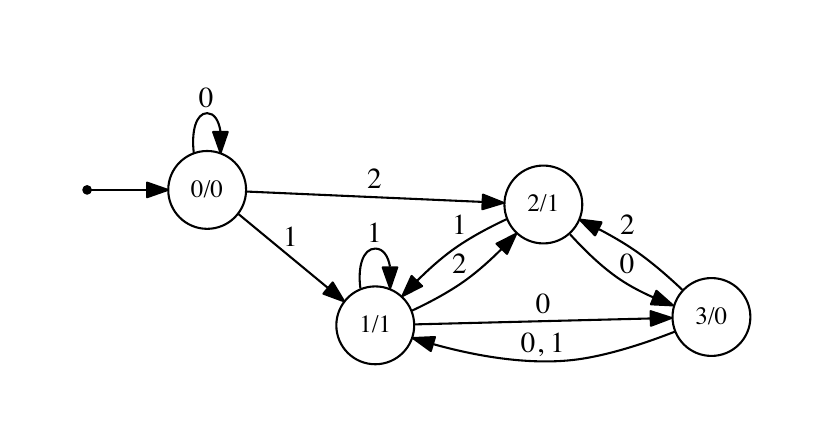}
\end{center}
\caption{DFAO for the sequence $\bf k$.}
\label{fig7}
\end{figure}

We can now verify that this automaton actually does compute
the Kimberling sequence.  We can do this by induction, by
verifying that
$${\bf k}[1..{\cal K}_n] = {\bf k}[1..{\cal K}_{n-1}] \ {\bf k}[1..{\cal K}_{n-1}] \ 
{\bf k}[1..{\cal K}_{n-2}]  \ {\bf k}[1..{\cal K}_{n-2}] .$$
To do so, we use the following {\tt Walnut} code:
\begin{verbatim}
reg isk msd_kim "0*10*":
reg pair msd_kim msd_kim "[0,0]*[1,0][0,1][0,0]*":
eval checkk1 "?msd_kim At,x ($isk(x) & t>=1 & t<=x) => K[t+x]=K[t]":
eval checkk2 "?msd_kim At,x,y ($pair(x,y) & t>=1 & t<=y) => K[t+2*x]=K[t]":
eval checkk3 "?msd_kim At,x,y ($pair(x,y) & t>=1 & t<=y) => K[t+2*x+y]=K[t]":
\end{verbatim}
Here $\isk(x)$ asserts that $x = {\cal K}_n$ for some $n \geq 1$, and
$\pair(x,y)$ asserts that $x = {\cal K}_{n+1}$ and $y = {\cal K}_n$ for some $n \geq 1$.

\section{Synchronized automaton for Gutkovskiy's sequence}

The last piece of the puzzle we need is a synchronized DFA 
computing Gutkovskiy's sequence \seqnum{A286389}.
To find this automaton we once again guess
it from empirical data, and
then verify it using Eq.~\eqref{gut}.  

The guessed $17$-state automaton is called `gut'.
To verify its correctness we use
the following {\tt Walnut} code:
\begin{verbatim}
eval check1 "?msd_kim An Ex $gut(n,x)":
eval check2 "?msd_kim An,x,y ($gut(n,x) & $gut(n,y)) => x=y":
eval check3 "?msd_kim $gut(0,0) & An,x,y,z (n>=1 & $gut(n,x) &
   $gut(n-1,y) & $gut(y/2,z)) => x+z=n":
\end{verbatim}
Thus our automaton correctly computes Gutkovskiy's sequence.

\section{Proof of Irvine's conjecture and more}

We now have everything we need to prove Irvine's conjecture.
\begin{theorem}
For $n \geq 0$ we have $g_n = \sum_{1 \leq i \leq n} k_i$.
\end{theorem}

\begin{proof}
We use the following {\tt Walnut} code:
\begin{verbatim}
eval check "?msd_kim An K[n]=@1 <=> (Ex $gut(n-1,x) & $gut(n,x+1))":
\end{verbatim}
and {\tt Walnut} returns {\tt TRUE}.
\end{proof}

Dekking, in the `formula' section of 
sequence \seqnum{A286389}, observed that
$g_n = (\sqrt{3}-1) n + O(1)$.  In fact we can prove a more exact
expression, a
kind of ``closed form'' for $g_n$.
\begin{theorem}
Define $\alpha = (\sqrt{3}-1)/2$ and $\beta = \sqrt{3}/3$.
We have
$$ g_n = \begin{cases}
	2 \lfloor \alpha n \rfloor + 1, & \text{if $[n]_K$ ends in $1$;} \\
	2 \lfloor \alpha n + \beta \rfloor, & \text{if $[n]_K$ ends in $0$ or $2$}.
	\end{cases}
$$
\end{theorem}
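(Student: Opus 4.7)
The plan is to reduce the theorem to a single Walnut evaluation by expressing the right-hand side as a first-order formula over the $K$-numeration system. We already have the synchronized automaton \texttt{gut} for $g_n$, the last-digit conditions on $(n)_K$ are cut out by obvious regular expressions, and all arithmetic in the displayed formula is integer once the floors are evaluated; so the only missing ingredients are synchronized automata \texttt{alpha} and \texttt{alphabeta} for the two maps $n \mapsto \lfloor \alpha n \rfloor$ and $n \mapsto \lfloor \alpha n + \beta \rfloor$.

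To build the first of these I would follow the guess-and-verify paradigm already used for the adder. Tabulate $y = \lfloor \alpha n \rfloor$ for small $n$, apply the Myhill--Nerode-style inference to the table to obtain a candidate DFA, and then certify it in Walnut. The certification rewrites $y = \lfloor \alpha n \rfloor$ as the integer double inequality $2y \le (\sqrt{3}-1)n < 2y + 2$, equivalently $2y + n \le \sqrt{3}\, n < 2y + n + 2$. Since $\sqrt{3} = \gamma - 1$, where $\gamma$ is the dominant root of the characteristic polynomial $x^2 - 2x - 2$ of the $K$-recurrence, multiplication by $\sqrt{3}$ of an integer given in $K$-representation is a synchronized relation; this is precisely the Pisot phenomenon already invoked when citing Frougny and Solomyak for the adder. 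Equivalently, using the Binet form~\eqref{binet} one can replace the irrational inequality by a comparison of two linear-recurrence expressions in the digits of $(n)_K$ and $(y)_K$, whose outcome is determined by a bounded lookahead because $|\delta| < 1$. Either route produces a synchronized automaton; an analogous construction, shifting the constant term by $\beta$, handles $\lfloor \alpha n + \beta \rfloor$.

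With both floor automata in hand, the theorem reduces to a single Walnut \texttt{eval} asserting: for all $n$ and $x$ with $\gut(n,x)$, if $(n)_K$ ends in $1$ then there exists $m$ with \texttt{alpha}$(n,m)$ and $x = 2m+1$, and otherwise there exists $m$ with \texttt{alphabeta}$(n,m)$ and $x = 2m$. Each clause is expressible in Walnut's first-order language, so a \texttt{TRUE} response completes the proof.

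The main obstacle is producing and certifying the two floor automata. Guessing them from data is mechanical, but the inferred DFAs are likely to be substantially larger than the ones seen so far, and the inductive certification must correctly handle the delicate boundary cases where the fractional part of $\alpha n$ crosses $0$ or crosses the threshold $1 - \beta$. Everything after the floor automata are verified is bookkeeping.
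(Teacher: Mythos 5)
Your strategy differs structurally from the paper's, and its central step is left as an acknowledged ``obstacle'' rather than actually carried out, which is where the real content of the theorem lives. The paper never builds synchronized automata for $n \mapsto \lfloor \alpha n \rfloor$ or $n \mapsto \lfloor \alpha n + \beta \rfloor$. Instead it proves in Walnut the exact integer identity $g([xa]_K) = 2[x]_K + a$ (relating $g$ to the digit-shift in the $K$-numeration system, via the automaton {\tt kshift}), and then handles the irrational part entirely by hand: the Binet form gives ${\cal K}_{i+1} - \gamma {\cal K}_i = (2-\sqrt{3})\delta^i$, summing the geometric tails yields the two-sided bound on $[x0]_K - \gamma[x]_K$, and elementary floor/ceiling manipulation converts that bound into the stated closed form (note $\alpha = 1/\gamma$). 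So the division of labor is: Walnut verifies an exact identity, and the floors are dispatched analytically.

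The gap in your version is the certification of the floor automata. Walnut's language is first-order over the natural numbers in the $K$-system; it cannot directly evaluate your double inequality $2y + n \le \sqrt{3}\,n < 2y + n + 2$, because $\sqrt{3}\,n$ is not an object in that language. To certify a guessed automaton for $\lfloor \alpha n \rfloor$ you must first translate the irrational inequality into a statement purely about $K$-representations, and that translation is exactly the Binet-form estimate the paper performs by hand --- it is not supplied by Frougny--Solomyak, whose result concerns normalization and addition in Pisot numeration systems, not the synchronization of Beatty-type functions $n \mapsto \lfloor n/\gamma \rfloor$. Your parenthetical remark that the comparison is ``determined by a bounded lookahead because $|\delta|<1$'' is the right idea, but it is precisely the hard part, and the boundary analysis (how the last digit $a$ of $(n)_K$ shifts the fractional part, which is what forces the case split between $a=1$ and $a\in\{0,2\}$ and the extra additive $\beta$) is not done. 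Once that analysis is written out, you have essentially reproduced the paper's proof, at which point the floor automata and the final {\tt eval} are an unnecessary detour. As a self-contained argument, the proposal as written does not yet prove the theorem.
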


\begin{proof}
The starting point is the Binet form given in Eq.~\eqref{binet}.
From this, we easily verify that
\begin{equation}
{\cal K}_{i+1} - \gamma {\cal K}_i = (2-\sqrt{3}) \delta^i 
\label{binet2}
\end{equation}
for $i \geq 0$.

Now suppose $x = a_t a_{t-1} \cdots a_0 \in \{0,1,2\}^*$.
From \eqref{kimex} we have
$$ [x]_K = \sum_{0 \leq i \leq t} a_i {\cal K}_i$$
and
$$ [x0]_K = \sum_{0 \leq i \leq t} a_i {\cal K}_{i+1} .$$
Then, from \eqref{binet2}, we get
\begin{equation}
[x0]_K - \gamma [x0]_K = \sum_{0\leq i\leq t} a_i (2-\sqrt{3})\delta^i .
\label{diff}
\end{equation}
   
Since $-1 < \delta < 0$, we can bound the left-hand side
of \eqref{diff} by considering even powers of $\delta$ separately
from odd powers of $\delta$.  Summing to infinity, we get
\begin{equation}
-2 + {{2\sqrt{3}}\over 3} < [x0]_K - \gamma[x]_K < {{2\sqrt{3}}\over 3}.
\label{crucial1}
\end{equation}
This is one of the two crucial relations.

The second crucial relation, which can be proved by Walnut, is
\begin{equation}
g([xa]_K) = 2[x]_K + a.    
\label{crucial2}
\end{equation}
for $a \in \{0,1,2\}$.
Here I am writing $g()$ instead of $g_{}$ to make it easier to understand.
To prove it, we use the following {\tt Walnut} code:
\begin{verbatim}
reg has22 {0,1,2} "(0|1|2)*22(0|1|2)*":
reg lastd {0,1,2} {0,1,2} "()|([0,0]|[1,0]|[2,0])*([0,0]|[1,1]|[2,2])":
def lastdig "?msd_kim $lastd(n,x) & ~$has22(n)":
eval testeq "?msd_kim An,x,y,z ($gut(n,x) & $lastdig(n,y) & $kshift(n,z)) => 
	x=2*z+y":
\end{verbatim}
Here
\begin{itemize}
\item {\tt has22} checks for occurrence of the forbidden pattern $22$ in
an expansions;
\item {\tt lastd} takes two inputs $x$ and $y$ and accepts if $y$ is the
last digit of $x$;
\item {\tt lastdig} further enforces the condition that the inputs
be in the proper form for a Kimberling expansion; and
\item {\tt kshift} is a simple $3$-state automaton that accepts, in parallel,
inputs of the form $xa$ and $0x$.
\end{itemize}
Since the last command returns {\tt TRUE}, the result is proved.

Now let $n$ be a positive integer with Kimberling expansion $xa$, for
some string $x$ and $a\in \{0,1,2\}$.   
Then it is trivial that $n = [x0]_K + a$.
Multiply \eqref{crucial1} by $-2/\gamma$,
which reverses the inequalities, to get
\begin{equation}
{{2\sqrt{3}}\over 3} - 2
<  2[x]_K - (2/\gamma)[x0]_K < {{8 \sqrt{3}} \over 3} - 4 .
\label{ineq}
\end{equation}
Now add $a(1-2/\gamma)$ to both sides of \eqref{ineq} to get
\begin{equation}
{{2\sqrt{3}}\over 3} - 2 + a(1-2/\gamma) < 2[x]_K + a -(2/\gamma) ([x0]+a) <
{{8 \sqrt{3}} \over 3} - 4 + a(1-2/\gamma) .
\label{ineq2}
\end{equation}
Finally, since $n = [xa]$ and $g_n = 2[x]+ a$ and $[x0]+a = [xa]$ and
$1-2/\gamma = 2-\sqrt{3}$, we get
\begin{equation}
{{2\sqrt{3}}\over 3} - 2 + a(2-\sqrt{3}) < g_n -(2/\gamma)n   <
{{8 \sqrt{3}} \over 3} - 4 + a(2-\sqrt{3}) .
\label{ineq3}
\end{equation}

From \eqref{crucial2} we see that $g(n)$ is odd iff $a = 1$.  In this
case, setting $a=1$,
subtracting $1$ from \eqref{ineq3} and dividing by $2$, we get
$$
-0.7886751347\dots = -(\sqrt{3}/6 + 1/2) < (g(n)-1)/2 - {n\over\gamma} < 5 \sqrt{3}/6 - 3/2  = -0.0566243267\dots$$
and hence
$\lceil (g(n)-1)/2 - {n\over\gamma} \rceil = 0$.
But $(g(n)-1)/2$ is an integer, so we can shift it out of the ceiling expression
to get $(g(n)-1)/2 + \lceil -{n\over\gamma} \rceil = 0$.  Using $-\lfloor x \rfloor = \lceil -x \rceil$, we get 
$(g(n)-1)/2 - \lfloor {n\over\gamma} \rfloor = 0$
and hence $(g(n)-1)/2 = \lfloor  {n\over\gamma}\rfloor$.
Thus $g(n) = 2 \lfloor  {n\over\gamma}\rfloor + 1$.  

Now note that $g(n)$ is even iff either $a=0$ or $a = 2$.
Then, starting with \eqref{ineq3}, and dividing by $2$, we find
$$\sqrt{3}/3 - 1 < g(n)/2 -  {n\over\gamma}< \sqrt{3}/3.$$
Adding $1-\sqrt{3}/3$ to these inequalities gives
$$g(n)/2 -  {n\over\gamma}+ 1-\sqrt{3}/3 \in (0,1),$$
so
$ \lceil  g(n)/2 -  {n\over\gamma}+ 1-\sqrt{3}/3 \rceil = 1$.
But $g(n)/2 + 1$ is an integer, so we can pull it out of the ceiling to get
$g(n)/2 + 1 + \lceil -n/g - \sqrt{3}/3 \rceil = 1$.
Thus
$g(n)/2 + 1 - \lfloor n/g + \sqrt{3}/3 \rfloor = 1$, and
hence 
$g(n)/2 = \lfloor n/g + \sqrt{3}/3 \rfloor$, as desired.
\end{proof}

\begin{remark}
The idea of the proof follows the general lines of
a proof of Don Reble for Fibonacci
representations \cite{Reble:2008}.
\end{remark}


\section{Some related sequences and a problem of Fokkink, Ortega, and Rust}

We now turn to three related sequences; for $n\geq 1$
the first two give the $n$'th
positions of the ones (resp., zeros) in the sequence $\bf k$.
We call them $A_n$ and $B_n$, respectively.
The third sequence, called $Q_n$, has a more complicated definition:
\begin{equation}
Q_n = \begin{cases}
n, & \text{if $n \leq 1$;} \\
Q_m, & \text{if $n = Q_m + 2m$ and there is } \\
& \text{exactly one $i<n$ with $Q_i = Q_m$;}  \\
\text{least positive integer not in $Q_1, \ldots, Q_{n-1}$},
& \text{otherwise.}
\end{cases}
\end{equation}
It is sequence \seqnum{A026366} in the OEIS.
\begin{table}[H]
\begin{center}
\begin{tabular}{c|cccccccccccccccc}
$n$ & 0 & 1 & 2 & 3 & 4 & 5 & 6 & 7 & 8 & 9 & 10 & 11 & 12 & 13 & 14 & 15 \\
\hline
$A_n$ & 0 & 1 & 2 & 4 & 5 & 7 & 8 & 9 & 10 & 12 & 13 & 15 & 16 & 17 & 18 & 20 \\
$B_n$ & 0 & 3 & 6 & 11 & 14 & 19 & 22 & 25 & 28 & 33 & 36 & 41 & 44 & 47 & 50 & 55\\
$Q_n$ & 0 & 1 & 2 & 1 & 3 & 4 & 2 & 5 & 6 & 7 & 8 & 3 & 9 & 10 & 4 & 11 \\
\end{tabular}
\end{center}
\caption{First few values of $A_n$, $B_n$, and $Q_n$.}
\label{tab9}
\end{table}

Once again we can guess synchronized automata computing these
functions and verify that they are correct.  The guessed automaton
for $A_n$ has $23$ states, the guessed automaton for $B_n$ has
$24$ states, and the guessed automaton for $Q_n$ has 45 states.
We call them `aa', `bb', and `qq', respectively.

We now verify correctness of $A$ and $B$:
\begin{verbatim}
eval check_A_1 "?msd_kim An Ex $aa(n,x)":
eval check_A_2 "?msd_kim An,x,y ($aa(n,x) & $aa(n,y)) => x=y":
eval check_A_3 "?msd_kim Ax (En n>=1 & $aa(n,x)) <=> K[x]=@1":
eval check_A_4 "?msd_kim An,x,y ($aa(n,x) & $aa(n+1,y)) => x<y":
eval check_B_1 "?msd_kim An Ex $bb(n,x)":
eval check_B_2 "?msd_kim An,x,y ($bb(n,x) & $bb(n,y)) => x=y":
eval check_B_3 "?msd_kim Ax (En  $bb(n,x)) <=> K[x]=@0":
eval check_B_4 "?msd_kim An,x,y ($bb(n,x) & $bb(n+1,y)) => x<y":
\end{verbatim}
and {\tt Walnut} returns {\tt TRUE} for all of these.

To verify correctness of $Q$, we need to verify its definition:
\begin{verbatim}
def occurs_once_in "?msd_kim (Ei,x i>=1 & i<n & $qq(i,x) & $qq(m,x)) &
   (~Ei,j,x i>=1 & i<j & j<n & $qq(i,x) & $qq(j,x) & $qq(m,x))":
# true if Q_m occurs exactly once in Q_0, Q_1, ..., Q_{n-1} 

def occurs_in "?msd_kim Ei,y i<n & $qq(i,y) & $qq(i,x)":
# true if x occurs in Q_0, ..., Q_{n-1}

def least_not_in "?msd_kim (~$occurs_in(n,x)) &
   (Az (~$occurs_in(n,z)) => z>=x)":
# true if x is the least integer not in Q_1, ..., Q_{n-1}

eval check_Q_1 "?msd_kim An Ex $qq(n,x)":
eval check_Q_2 "?msd_kim An,x,y ($qq(n,x) & $qq(n,y)) => x=y":
eval check_Q_3 "?msd_kim Am,n,y,z (1<=m & m<n & $occurs_once_in(m,n) &
   $qq(m,y) & n=y+2*m & $qq(n,z)) => y=z":
eval check_Q_4 "?msd_kim An,y ($qq(n,y) & ~(Em 1<=m & m<n &
   $occurs_once_in(m,n))) => $least_not_in(n,y)":
\end{verbatim}
So indeed our automaton computes $Q_n$ correctly.

If we look at OEIS sequence \seqnum{A026367}, we see that its description
says (essentially) ``least $t$ such that $Q_t = n$".   
This allows use to verify that \seqnum{A026367} is in fact $A_n$, as follows:
\begin{verbatim}
def check_A_5 "?msd_kim An,t $aa(n,t) => $qq(t,n) & Au (u<t) => ~$qq(u,n)":
\end{verbatim}

Similarly, if we look at OEIS sequence \seqnum{A026368}, we see that its
description says (essentially) ``greatest $t$ such that $Q_t = n$".
We can then verify that \seqnum{A026368} is in fact $B_n$, as follows:
\begin{verbatim}
def check_B_5 "?msd_kim An,t $bb(n,t) => $qq(t,n) & Au (u>t) => ~$qq(u,n)":
\end{verbatim}
In particular, we have proved Neil Sloane's observation that ``\seqnum{A026368} 
appears to be [the] complement[ary] sequence of \seqnum{A026367}\,".

We can easily verify the observation of
Fokkink, Ortega, and Rust \cite{Fokkink&Ortega&Rust:2022} that
$B_n = 2A_n + n$ for $n \geq 0$:
\begin{verbatim}
eval check_FOR "?msd_kim An,x,y ($aa(n,x) & $bb(n,y)) => y=2*x+n":
\end{verbatim}
and {\tt Walnut} returns {\tt TRUE}.

Finally, Fokkink, Ortega, and Rust \cite{Fokkink&Ortega&Rust:2022}
left the following as an open problem, which we can turn into a theorem.
\begin{theorem}
For all $n$ we have $A_{B_n} \in \{ A_n + B_n - 1, A_n + B_n \}$.
\label{fokk}
\end{theorem}
\begin{proof}
We use the following {\tt Walnut} code:
\begin{verbatim}
eval check_FOR_2 "?msd_kim An,t,x,y ($aa(n,t) & $bb(n,x) & $aa(x,y)) => 
   (y=t+x|y+1=t+x)":
\end{verbatim}
and {\tt Walnut} returns {\tt TRUE}.
\end{proof}

\begin{remark}
Furthermore we could, if it were desired, give a DFAO
that computes, for each input $n$, which of the two
alternatives in Theorem~\ref{fokk} holds.

Similarly we can prove, for example, that
$B_{A_n} - A_n - B_n \in \{-3,-2,-1,0,1\}$.
\end{remark}

\section{Two more related sequences}

In this section we consider two additional related sequences:
$g'_n := g_n \bmod 2$, and $h_n := \sum_{0 \leq i < n} g'_n$.   The
first few terms are given in Table~\ref{tab6}.
\begin{table}[H]
\begin{center}
\begin{tabular}{c|ccccccccccccccccccccc}
$n$ &0& 1& 2& 3& 4& 5& 6& 7& 8& 9&10&11&12&13&14&15&16&17&18&19&20\\
\hline
$g'_n$ &0& 1& 0& 0& 1& 0& 0& 1& 0& 1& 0& 0& 1& 0& 0& 1& 0& 1& 0& 0& 1 \\
$h_n$ & 0& 0& 1& 1& 1& 2& 2& 2& 3& 3& 4& 4& 4& 5& 5& 5& 6& 6& 7& 7& 7 
\end{tabular}
\end{center}
\caption{First few values of $g'_n$ and $h_n$.}
\label{tab6}
\end{table}

\begin{theorem}
The sequence $(g'_n)_{n \geq 0}$ is sequence \seqnum{A284772} in the
OEIS, that is, it is the fixed point of the morphism
$u:  0 \rightarrow 01$, $1 \rightarrow 0010$.
\end{theorem}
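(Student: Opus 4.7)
The plan is to proceed in the spirit of the earlier proofs in this paper: construct a DFAO for $g'_n = g_n \bmod 2$ in the $K$-numeration system, guess a synchronized automaton for an auxiliary block-boundary function, and use {\tt Walnut} to mechanically verify the morphism identity ${\bf g'} = u({\bf g'})$.  Since $u$ has a unique fixed point beginning with $0$, and $g'_0 = g_0 \bmod 2 = 0$, this identity pins ${\bf g'}$ down as the fixed point.

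First, using the synchronized automaton {\tt gut} already constructed for $g_n$, we extract the parity bit by a short {\tt Walnut} definition analogous to the {\tt combine} step used earlier for ${\bf k}$, obtaining a DFAO {\tt GP} whose output at $n$ is $g'_n \in \{0,1\}$.  To certify the block decomposition ${\bf g'} = u(g'_0)\,u(g'_1)\,u(g'_2)\cdots$, we introduce the block-boundary function
\[
 L(n) = \sum_{0 \le i < n} |u(g'_i)| = 2n + 2\sum_{0 \le i < n} g'_i,
\]
and note that the decomposition holds iff $L(0)=0$ and, for every $n \ge 0$: when $g'_n = 0$ we have $L(n+1) = L(n)+2$ with $g'_{L(n)}=0$ and $g'_{L(n)+1}=1$, and when $g'_n = 1$ we have $L(n+1) = L(n)+4$ with $(g'_{L(n)},g'_{L(n)+1},g'_{L(n)+2},g'_{L(n)+3}) = (0,0,1,0)$.

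Following the guess-and-verify paradigm of the paper, we compute enough values of $L$ to infer, via the standard Myhill--Nerode procedure, a candidate synchronized DFA {\tt ell} in the $K$-numeration system.  {\tt Walnut} then checks mechanically that {\tt ell} is well-defined and functional, that ${\tt ell}(0,0)$ holds, and that the two case-distinctions above are universally true, with {\tt GP} supplying the values of $g'$.  The main obstacle is the initial guessing of {\tt ell}: although $L$ grows linearly with slope $2\sqrt{3}$, its state complexity in the $K$-numeration system may be moderately large, so the inference table must be built from a sufficiently long prefix of ${\bf g'}$ for the guess to converge to the correct minimal automaton.  Once {\tt ell} is in hand the remaining verification is entirely routine.
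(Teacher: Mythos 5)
Your proposal is correct, but it takes a genuinely different route from the paper. The paper does not verify the fixed-point equation inside {\tt Walnut} at all: after building the DFAO {\tt GP}, it reads off from the transition diagram a morphism $r: 0\rightarrow 012$, $1\rightarrow 012$, $2\rightarrow 01$ and a coding $s$ with $(g'_n)_{n\ge 0}=s(r^\omega(0))$ (using the correspondence between $K$-automatic and morphic sequences), and then proves $u^\omega(0)=s(r^\omega(0))$ by a short hand induction showing $u^n(0)=s(r^{n-1}(01))$ and $u^n(1)=s(r^{n-1}(2012))$. You instead certify ${\bf g}'=u({\bf g}')$ directly and mechanically, via a guessed synchronized block-boundary function $L(n)=2n+2\sum_{0\le i<n}g'_i$ together with the inductive checks $L(0)=0$, $L(n+1)=L(n)+2+2g'_n$, and the letterwise conditions that the factor of ${\bf g}'$ at position $L(n)$ equals $u(g'_n)$; combined with $g'_0=0$ and the uniqueness of the fixed point of $u$ prolongable on $0$, this is a complete argument. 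Your method keeps the entire verification inside {\tt Walnut} and does not depend on the numeration system being aligned with the morphism $u$ (it would work for any claimed morphic description), at the cost of guessing one more synchronized automaton; the paper's method needs no new automaton but steps outside the mechanized framework for the final induction. Two small remarks: the function $L$ is indeed $K$-synchronized (e.g., because $2\sum_{0\le i<n}g'_i = g_n-g'_n$, so $L$ is definable from {\tt gut} and {\tt GP}, which also gives an alternative to guessing), and its slope is $1+\sqrt{3}\doteq 2.73$, not $2\sqrt{3}$ as you state --- an immaterial slip that does not affect the proof.
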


\begin{proof}
First, we create an automaton (in the Kimberling numeration system) for
$g'_n$ with {\tt Walnut}:
\begin{verbatim}
def gp "?msd_kim Ex,y $gut(n,x) & x=2*y+1":
combine GP gp:
\end{verbatim}
which produces the automaton {\tt GP} computing $g'_n$ displayed
in Figure~\ref{fig11}.
\begin{figure}[H]
\vskip -.5in
\begin{center}
\includegraphics[width=6in]{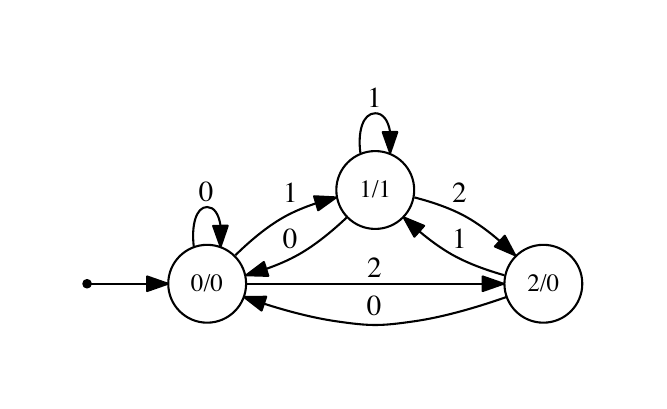}
\end{center}
\vskip -.5in
\caption{DFAO computing $g'(n)$.}
\label{fig11}
\end{figure}
From the transition diagram of this automaton, we can easily read off
the morphism $r: 0 \rightarrow 012$, $1 \rightarrow 012$, $2 \rightarrow 01$
and coding $s: 0,2\rightarrow 0$, $1 \rightarrow 1$, so that
$(g'_n)_{n \geq 0} = s(r^\omega(0))$.

It now remains to verify that $u^\omega(0) = s(r^\omega(0))$.  
To do this, we prove by induction on $n$ that
\begin{equation}
u^n (0) = s(r^{n-1} (01)) \quad \text{and} \quad  u^n(1) = s(r^{n-1} (2012).
\label{stu}
\end{equation}
The base case is $n = 1$ and is trivial.   Now assume
$n \geq 2$.  For the induction step, assume that \eqref{stu} holds
for $n' < n$.  Then
\begin{align*}
u^n(0) &= u^{n-1} (01) = s(r^{n-2} (01) r^{n-2} (2012)) 
= s(r^{n-2} (012012)) = s(r^{n-1} (01)) \\
u^n(1) &= u^{n-1} (0010) = s(r^{n-2} (01) r^{n-2} (01) r^{n-2} (2012) r^{n-2} (01)) \\
& = s(r^{n-2} (0101201201)) = s(r^{n-1} (2012)) ,
\end{align*}
as desired.
\end{proof}

\begin{theorem}
For $n \geq 0$ we have $g_n = 2 h_n + g'_n$.
\end{theorem}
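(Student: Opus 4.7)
The plan is to prove the identity by induction on $n$, reducing it to a single automatic statement checkable in Walnut. The base case $n = 0$ is immediate: $g_0 = 0$, $h_0 = 0$ (empty sum), and $g'_0 = g_0 \bmod 2 = 0$, so $2h_0 + g'_0 = 0 = g_0$.

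For the inductive step, I take forward differences of both sides. From the definition $h_n = \sum_{0 \leq i < n} g'_i$, we have $h_n - h_{n-1} = g'_{n-1}$, and therefore
\[
(2h_n + g'_n) - (2h_{n-1} + g'_{n-1}) = 2g'_{n-1} + g'_n - g'_{n-1} = g'_{n-1} + g'_n.
\]
Meanwhile, by Irvine's conjecture (the previous theorem of this paper), $g_n - g_{n-1} = k_n$. Assuming the identity at $n-1$, the identity at $n$ is therefore equivalent to
\[
k_n = g'_{n-1} + g'_n \qquad \text{for all } n \geq 1.
\]
Note that this implicitly forces $g'_{n-1}$ and $g'_n$ to never both equal $1$, since $k_n \in \{0,1\}$ --- a small side claim that will be established simultaneously by the check.

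The last step is to verify this reduced identity mechanically. Since ${\bf k}$ is computed by the DFAO $K$ and $(g'_n)$ by the DFAO $GP$, both in the $K$-numeration system, the statement is a first-order sentence that Walnut can evaluate directly; something like
\begin{verbatim}
eval check_sum "?msd_kim An (n>=1) =>
   ((K[n]=@0 & GP[n-1]=@0 & GP[n]=@0) |
    (K[n]=@1 & ((GP[n-1]=@0 & GP[n]=@1) |
                (GP[n-1]=@1 & GP[n]=@0))))":
\end{verbatim}
should return \texttt{TRUE}, completing the proof.

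The main obstacle is not conceptual but essentially bookkeeping: aligning the indexing conventions of $K$, $GP$, and the partial sum defining $h$, and ensuring $n-1$ is handled correctly at the boundary $n=0$. No new automata need to be constructed beyond those already built earlier in the paper, and the ``hard'' semantic content --- that $g_n-g_{n-1}=k_n$ --- has already been absorbed into Irvine's conjecture.
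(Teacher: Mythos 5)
Your proof is correct, but it takes a genuinely different route from the paper. The paper's (sketched) argument guesses a new synchronized automaton for $(h_n)$, verifies it against the recurrence $h_{n+1}=h_n+g'_n$, and then checks $g_n=2h_n+g'_n$ directly as a Walnut query involving that automaton. You instead eliminate $h_n$ altogether by telescoping: differencing both sides and invoking Irvine's conjecture ($g_n-g_{n-1}=k_n$) reduces the theorem to the single identity $k_n=g'_{n-1}+g'_n$ for $n\geq 1$, which involves only the DFAOs {\tt K} and {\tt GP} already built earlier in the paper. Your reduction is sound (the induction step $g_n=g_{n-1}+k_n=2h_{n-1}+2g'_{n-1}+g'_n=2h_n+g'_n$ checks out, and the identity agrees with the tabulated values), and the reduced statement is first-order over automatic sequences in the same numeration system, so it is decidable by Walnut exactly as you propose. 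What each approach buys: yours needs no new guessed automaton and makes the arithmetic content ($k_n=g'_{n-1}+g'_n$, i.e., the ones of $\bf k$ mark exactly the positions where $g'$ changes parity pattern) explicit and human-readable; the paper's approach costs an extra guess-and-verify step but yields a synchronized automaton for $h_n$ as a reusable byproduct. The only dependency worth flagging is that your argument leans on the earlier theorem establishing Irvine's conjecture, whereas the paper's version does not need it---but since that theorem is proved earlier in the paper, this is a legitimate dependency and not a gap.
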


\begin{proof}
We just sketch the proof, as the idea is similar to what we have
done before.  First, we ``guess'' a synchronized automaton
computing $(h_n)_{n \geq 0}$.   Then we verify it is correct
using the fact that we must have $h_{n+1} = h_n + g'_n$.  Finally,
we verify the equation $g_n = 2h_n + g'_n$.
\end{proof}

\section{Subword complexity}

Recall that the subword complexity function $\rho(n)$ counts the
number of distinct factors of length $n$ of an infinite word.  In this
section we compute this function for $\bf k$.

Call a factor $w$ of an infinite binary word $\bf x$ 
{\it right-special\/} if both $w0$ and
$w1$ appear in $\bf x$.
For binary words we know that $\rho(n+1)-\rho(n)$ counts the number of
length-$n$ right-special factors.   

{\tt Walnut} formulas for special factors are given in
\cite[\S 8.8.6]{Shallit:2022}.  Adapting them to our situation, we have
the following code:
\begin{verbatim}
def keqfac "?msd_kim At (t<n) => K[i+t]=K[j+t]":
def kisrs "?msd_kim Ej $keqfac(i,j,n) & K[i+n]!=K[j+n]":
eval nothree "?msd_kim Ei,j,k,n $kisrs(i,n) & $kisrs(j,n) & $kisrs(k,n) &
   ~$keqfac(i,j,n) & ~$keqfac(j,k,n) & ~$keqfac(i,k,n)":
def hastwo "?msd_kim Ei,j $kisrs(i,n) & $kisrs(j,n) & ~$keqfac(i,j,n)":
\end{verbatim}
Here
\begin{itemize}
\item {\tt keqfac} asserts that ${\bf k}[i..i+n-1] = {\bf k}[j..j+n-1]$;
\item {\tt kisrs} asserts that ${\bf k}[i..i+n-1]$ is a right-special factor;
\item {\tt nothree} asserts that there is no $n$ for which
$\bf k$ has three or more distinct right-special factors of length $n$;
\item {\tt hastwo} accepts precisely those $n$ for which $\bf k$
has exactly two distinct right-special factors of length $n$.
\end{itemize}

The automaton created by `hastwo' is displayed in Figure~\ref{fig8}.
\begin{figure}[htb]
\vskip -.5in
\begin{center}
\includegraphics[width=6in]{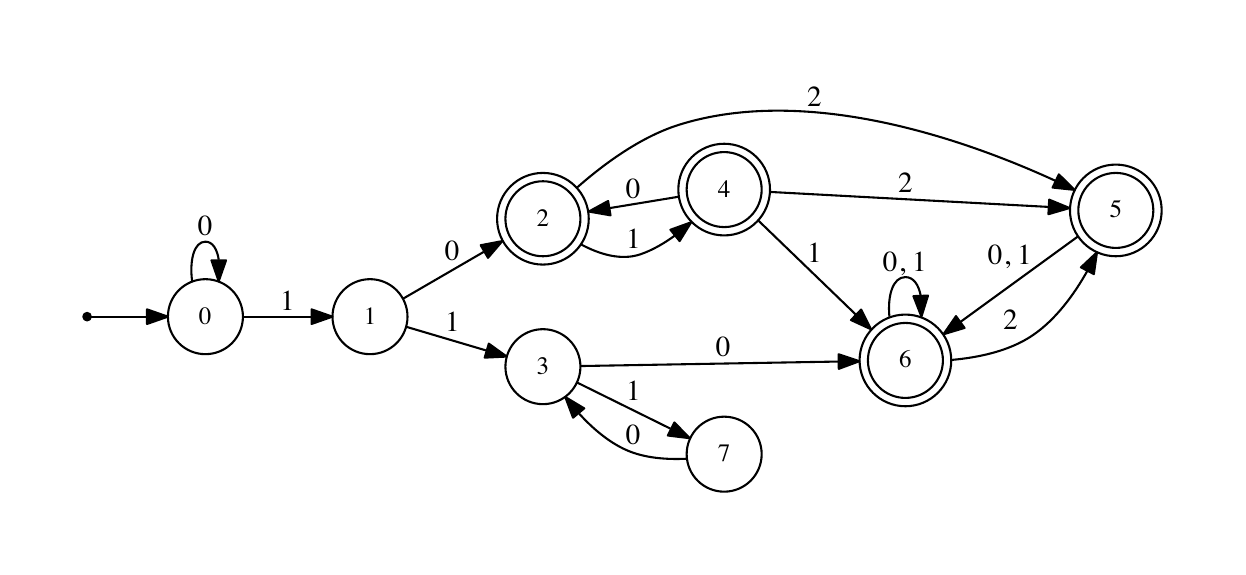}
\end{center}
\vskip -.5in
\caption{Automaton accepting those $n$ for which $\bf k$
has exactly two distinct right-special factors of length $n$.}
\label{fig8}
\end{figure}

We can now prove the following theorem.
\begin{theorem}
The infinite word 
$\bf k$ has exactly two distinct right-special factors of length $n$
if and only if there exists $i\geq 0$ such that one of the following
holds:
\begin{itemize}
\item $x \leq n < x+{\cal K}_{2i}$, where $x = 
{\cal K}_1 + {\cal K}_3 + \cdots + {\cal K}_{2i+1}$;
\item $y \leq n < y+{\cal K}_{2i+1}$, where
$y = {\cal K}_0 + {\cal K}_2 + \cdots + {\cal K}_{2i+2} $.
\end{itemize}
\end{theorem}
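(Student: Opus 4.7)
The plan is to reduce the theorem to a finite Walnut check by building an automaton for the right-hand side and verifying its equivalence with the language accepted by \texttt{hastwo}. The first observation is that the two families of anchor values have very simple regular $K$-representations: the value $X_i := \mathcal{K}_1 + \mathcal{K}_3 + \cdots + \mathcal{K}_{2i+1}$ has canonical $K$-representation $(10)^{i+1}$, while $Y_i := \mathcal{K}_0 + \mathcal{K}_2 + \cdots + \mathcal{K}_{2i+2}$ has canonical $K$-representation $(10)^{i+1}1$.

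Next, to make the displacement $\mathcal{K}_{2i}$ (respectively $\mathcal{K}_{2i+1}$) available alongside its anchor, I would build synchronized pair automata. Since $\mathcal{K}_{2i}$ has $K$-representation $10^{2i}$, pairing it with $X_i$ by left-padding the shorter word with one zero yields the joint regex $[1,0][0,1]([1,0][0,0])^{*}$; similarly, $(Y_i,\mathcal{K}_{2i+1})$ is captured by $[1,0][0,1]([1,0][0,0])^{*}[1,0]$. Both are expressible as Walnut \texttt{reg} statements (prepending $[0,0]^{*}$ to absorb any further leading-zero padding, mirroring the paper's existing \texttt{pair} regex), giving synchronized DFAs $\mathrm{pairX}(x,k)$ and $\mathrm{pairY}(y,k)$.

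Finally, I would define the right-hand side predicate
\[
\mathrm{rhs}(n) \equiv \bigl(\exists x,k\colon \mathrm{pairX}(x,k) \wedge x \le n < x+k\bigr) \;\vee\; \bigl(\exists y,k\colon \mathrm{pairY}(y,k) \wedge y \le n < y+k\bigr),
\]
and ask Walnut to decide $\forall n\,(\mathrm{hastwo}(n) \iff \mathrm{rhs}(n))$, relying on the $K$-adder constructed earlier to interpret the inequalities. The main obstacle is clerical rather than conceptual: one must carefully verify that the pair regexes pick out exactly the intended pairs $(X_i,\mathcal{K}_{2i})$ and $(Y_i,\mathcal{K}_{2i+1})$ and nothing else. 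Once the regexes are correct, the equivalence with \texttt{hastwo} becomes a routine automaton equality check that Walnut decides automatically.
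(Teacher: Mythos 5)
Your proposal is correct and takes essentially the same approach as the paper: the paper likewise encodes the two interval families by a synchronized regular expression over $K$-representations (its \texttt{ul}$(x,y)$ pairs each anchor with the interval's \emph{right endpoint} rather than with the displacement $\mathcal{K}_{2i}$ or $\mathcal{K}_{2i+1}$) and then has {\tt Walnut} verify $\forall n\, (\mathtt{hastwo}(n) \iff \exists x,y\ \mathtt{ul}(x,y) \wedge x\le n < y)$. Your pair regexes and the claimed canonical representations $(10)^{i+1}$ and $(10)^{i+1}1$ check out, so the difference is purely cosmetic.
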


\begin{proof}
We use the following {\tt Walnut} code:
\begin{verbatim}
reg ul msd_kim msd_kim "[0,0]*[1,1][0,1]([1,1][0,0])*(()|[1,1]":
eval check_sw "?msd_kim An $hastwo(n) <=> Ex,y $ul(x,y) & x<=n & n<y":
\end{verbatim}
and {\tt Walnut} returns {\tt TRUE}.
\end{proof}

\begin{corollary}
We have $\limsup_{n \geq 1} \rho(n)/n = (30+\sqrt{3})/23 \doteq 1.37965438$
and $\liminf_{n \geq 1} \rho(n)/n = (3+\sqrt{3})/4 \doteq 1.1830127$.
\end{corollary}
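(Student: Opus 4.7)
The plan is to translate the preceding classification of right-special factors into asymptotics for $\rho$. Since $\bf k$ is binary and (as a further Walnut check confirms) has at least one right-special factor of every length, $\rho(n+1)-\rho(n)$ equals the number of distinct right-special factors of length $n$; the {\tt nothree} verification together with the preceding theorem then shows this count equals $2$ when $n$ lies in one of the intervals $[X_i,X_i+{\cal K}_{2i})$ or $[Y_i,Y_i+{\cal K}_{2i+1})$ (with $X_i={\cal K}_1+{\cal K}_3+\cdots+{\cal K}_{2i+1}$ and $Y_i={\cal K}_0+{\cal K}_2+\cdots+{\cal K}_{2i+2}$), and $1$ otherwise. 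Telescoping,
$$\rho(n) \;=\; n+1+N(n-1),$$
where $N(m)$ counts the $k\in[1,m]$ lying in one of these special intervals. Hence the corollary reduces to computing $\limsup$ and $\liminf$ of $N(n)/n$.

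On a special interval both $N$ and $n$ grow by $1$ per step (and $N<n$), so $N(n)/n$ is strictly increasing there; on a gap $N$ is constant, so the ratio is strictly decreasing. Consequently the local maxima of $N(n)/n$ occur at the right endpoints $n=X_i+{\cal K}_{2i}-1$ or $n=Y_i+{\cal K}_{2i+1}-1$, and the local minima at the left endpoints $n=X_i$ or $n=Y_i$.

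The final step is the asymptotic evaluation using the Binet form \eqref{binet}: ${\cal K}_n=C\gamma^n+o(1)$ with $C=\tfrac12+\tfrac{\sqrt3}{3}$ and $\gamma=1+\sqrt3$. The numerator $N(n)$ at a right endpoint of index $i$ is $\sum_{j=0}^{2i}{\cal K}_j$ or $\sum_{j=0}^{2i+1}{\cal K}_j$, and the denominator is $X_i+{\cal K}_{2i}$ or $Y_i+{\cal K}_{2i+1}$; these are all geometric-type sums that I would evaluate using $\sum_{j=0}^{m}\gamma^{2j}=(\gamma^{2m+2}-1)/(\gamma^2-1)$. A short computation shows that both kinds of right endpoint yield the same limit
$$\frac{\gamma(\gamma+1)}{\gamma^3+\gamma^2-1}\;=\;\frac{5+3\sqrt3}{13+8\sqrt3}\;=\;\frac{7+\sqrt3}{23},$$
and that both kinds of left endpoint yield $(\gamma+1)/\gamma^3=(\sqrt3-1)/4$. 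Adding $1$ then recovers the stated values of $\limsup\rho(n)/n$ and $\liminf\rho(n)/n$. The only real obstacle is the asymptotic bookkeeping: verifying that the two types of special interval give matching limits at each end, so that a single formula governs the $\limsup$ and a single formula governs the $\liminf$. Once that is done, only routine algebra with $\gamma=1+\sqrt3$ remains.
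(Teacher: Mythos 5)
Your argument is correct and is exactly the computation the paper leaves implicit: the corollary is stated without proof as a consequence of the theorem on right-special factors, and your telescoping of $\rho(n+1)-\rho(n)$ over the two families of intervals, followed by the Binet-form asymptotics, does reproduce both constants ($1+(7+\sqrt{3})/23=(30+\sqrt{3})/23$ and $1+(\sqrt{3}-1)/4=(3+\sqrt{3})/4$). The only nit is that the local minima of $N(n)/n$ occur just \emph{before} the left endpoints $X_i$, $Y_i$ (the ratio already increases at the first step into a special interval), but this off-by-one is immaterial to the limits.
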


\section{Critical exponents}

Recall that we say $p\geq 1$ is a {\it period\/} of a finite word $x = x[1..n]$
if $x[i]=x[i+p]$ for $1 \leq i \leq n-p$.   The {\it exponent\/} of a finite
word $x$ is the length of $x$ divided by its shortest period.
Finally, the {\it critical exponent\/}
of an infinite word $\bf z$ is the supremum,
over all finite nonempty factors $x$ of $\bf z$, of the exponent of $x$.
\begin{theorem}
The critical exponent of $\bf k$ is $(2\sqrt{3} + 12)/3 \doteq 5.1547$.
\end{theorem}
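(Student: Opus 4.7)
The plan is to follow the standard \texttt{Walnut} methodology for computing the critical exponent of an automatic sequence, as in \cite[Chapter~10]{Shallit:2022}. First I would build a synchronized automaton in the $K$-numeration system that recognizes pairs $(n,p)$ with $1 \le p \le n$ such that some factor of $\bf k$ of length $n$ admits $p$ as a period. Concretely, one writes something like
\begin{verbatim}
def kperiod "?msd_kim p>=1 & p<=n & Ej At (t+p<n) => K[j+t]=K[j+t+p]":
\end{verbatim}
Since the critical exponent is the supremum of $n/p$ over all factors $x$ with $|x|=n$ and shortest period $p$, and since for fixed $n$ this supremum is attained at the smallest period, it equals $\sup\{n/p : \texttt{kperiod}(n,p)\}$.

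Second, I would analyze the structure of the resulting automaton to extract the extremal ratio. Because the underlying $K$-numeration is Pisot-like with dominant root $\gamma = 1+\sqrt{3}$, the ratios $n/p$ realized along infinite accepting paths correspond to limits of the form $(\sum a_i \gamma^i)/(\sum b_i \gamma^i)$ obtainable from the Binet form in Equation~\eqref{binet}. The supremum is approached along a specific repeating digit pattern — i.e., a simple cycle in the synchronized automaton — and identifying this cycle reduces the problem to a finite optimization. Plugging the extremal pattern into \eqref{binet} and simplifying should yield the closed form $(2\sqrt{3} + 12)/3$.

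Third, I would verify the answer in both directions. For the lower bound, I would exhibit an explicit family $(n_t, p_t)$ of pairs in $L(\texttt{kperiod})$ whose representations follow the extremal pattern, and show via Binet that $n_t/p_t \to (2\sqrt{3}+12)/3$. For the upper bound, I would convert the irrational inequality $n/p \le (12 + 2\sqrt{3})/3$ into an integer-coefficient statement $(3n-12p)^2 \le 12 p^2$ together with $3n \ge 12p$ (or similar), which \texttt{Walnut} can dispatch because its linear-arithmetic decision procedure handles such polynomial conditions on the $K$-representations and the automaton has only finitely many cycles to test.

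The hard part is the bridge between \texttt{Walnut}'s output and the closed-form algebraic number: \texttt{Walnut} will produce a finite automaton for \texttt{kperiod}, but identifying the extremal cycle, reading off the corresponding ${\cal K}_i$-combinations, and simplifying the limiting ratio via \eqref{binet} to recognize $(2\sqrt{3}+12)/3$ must be done by hand. Once the candidate value is in hand, the verification is mechanical.
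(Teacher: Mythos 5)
Your overall strategy matches the paper's: build a \texttt{Walnut} predicate for periods of factors of $\bf k$ in the $K$-numeration system, cut the accepted language down to the extremal pairs $(n,p)$, and then compute the limiting ratio $n/p$ by hand from the Binet form \eqref{binet}. The paper does exactly this, restricting to factors of exponent greater than $5$ (the linear condition \texttt{n>5*p}), which leaves an explicit two-parameter family of accepted pairs $(n,p) = ([121(01)^i0]_K, [10(00)^i0]_K)$ and $([121(01)^i02]_K, [10(00)^i00]_K)$, from which the supremum $(12+2\sqrt{3})/3$ follows by routine work with the recurrence ${\cal K}_n = 2{\cal K}_{n-1}+2{\cal K}_{n-2}$. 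Your second step is exactly this, and you correctly identify that the bridge from the automaton to the algebraic number must be done by hand.

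However, your proposed upper-bound verification in the third step does not work: \texttt{Walnut} decides an extension of Presburger arithmetic and cannot express products of two variables, so a condition like $(3n-12p)^2 \le 12p^2$ (which expands to terms in $n^2$, $np$, $p^2$) is not in its language; only multiplication by integer constants is available. This is why the paper first imposes the \emph{linear} threshold $n > 5p$: the resulting automaton accepts a language simple enough that the supremum of $n/p$ over \emph{all} accepted pairs (not merely over simple cycles, which by itself would only give a lower bound) can be read off and evaluated by hand. If you drop the quadratic \texttt{Walnut} check and instead argue that every accepted pair of \texttt{klong5} lies in the explicit family, your argument becomes the paper's.
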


\begin{proof}
Since the basic ideas have already been covered elsewhere in detail 
\cite[pp.~148--150]{Shallit:2022}, we just
sketch them here.  We create {\tt Walnut} formulas for the shortest period
of a factor of $\bf k$, and then obtain the corresponding longest
words with the given period.  Then we restrict to those factors of
exponent at least $5$.  The resulting automaton, computed by 
`klong5', accepts pairs of the form
$(n,p) = ([121(01)^i0]_K, [10(00)^i0]_K)$ and
$(n,p) = ([121(01)^i02]_K, [10(00)^i00]_K)$.
Routine work with two-term linear recurrences then gives the result.

\begin{verbatim}
def kperi "?msd_kim p>0 & p<=n & Aj (j>=i & j+p<i+n) => K[j]=K[j+p]":
def klper "?msd_kim $kperi(i,n,p) & (Aq (q>=1 & q<p) => ~$kperi(i,n,q))":
def kleastp "?msd_kim Ei,n n>=1 & $klper(i,n,p)":
def klongest "?msd_kim (Ei $klper(i,n,p)) &
   (Ar,i $klper(i,r,p) => r<=n)":
def klong5 "?msd_kim $klongest(n,p) & n>5*p":
\end{verbatim}
\end{proof}

\section*{Acknowledgments}

I thank Michel Dekking for telling me about his paper
\cite{Dekking:2023}, and in particular its Remark 7.
I also acknowledge with thanks conversations with
Benoit Cloitre and Stefan Zorcic.

\end{document}